\numberwithin{equation}{section}
\theoremstyle{plain}
\newtheorem{theorem}{Theorem}[section]
\newtheorem{corollary}[theorem]{Corollary}
\newtheorem{lemma}[theorem]{Lemma}
\newtheorem{proposition}[theorem]{Proposition}
\theoremstyle{definition}
\theoremstyle{remark}
\newtheorem{remark}[theorem]{Remark}
\newcommand{\R}{\mathbb{R}}
\newcommand{\Q}{\mathbb{Q}}
\newcommand{\Z}{\mathbb{Z}}
\newcommand{\C}{\mathbb{C}}
\renewcommand{\H}{\mathbb{H}}
\newcommand{\kzxz}[4]{\left(\begin{smallmatrix} #1 & #2 \\ #3 & #4\end{smallmatrix}\right) }
\renewcommand{\Im}{\operatorname{Im}}
\renewcommand{\Re}{\operatorname{Re}}
\newcommand{\calO}{\mathcal{O}}
\newcommand{\calS}{\mathcal{S}}
\newcommand{\eps}{\varepsilon}
\newcommand{\Mp}{\operatorname{Mp}}
\newcommand{\Mat}{\operatorname{Mat}}
\newcommand{\dv}{\operatorname{div}}
\newcommand{\Sym}{\operatorname{Sym}}
\newcommand{\GL}{\operatorname{GL}}
\newcommand{\rT}{\hspace{.1em}\ensuremath{{}^{\mathrm{t}}}}
\begin{document}

\title[Correction to ``Formal Fourier-Jacobi series'']{Correction to ``Kudla's Modularity Conjecture and Formal Fourier-Jacobi Series''}

\author[Jan H.~Bruinier and Martin Raum]{Jan Hendrik Bruinier and Martin Raum}
\address{Technische Universit\"at Darmstadt, Fachbereich Mathematik, Schlossgartenstraße 7,
D--64289 Darmstadt, Germany}
\email{bruinier@mathematik.tu-darmstadt.de}

\address{
Chalmers tekniska högskola och Göteborgs Universitet, Institutionen för
Matematiska vetenskaper, SE--412 96 Göteborg, Sweden
}
\email{martin@raum-brothers.eu}

\subjclass[2020]{Primary 11F46; Secondary 14C25}

\thanks{
The first author is supported in part by  the
DFG Collaborative Research Centre TRR 326 ``Geometry and Arithmetic of Uniformized Structures'', project number 444845124.
The second author was partially supported by Vetenskapsrådet grant 2023-04217.}


\begin{abstract}
We correct an error in Lemma 4.4 and its application in Theorem 4.5
of \cite{BR}.
 \end{abstract}

\maketitle



\section{Required corrections}

Throughout we use the notation of \cite{BR}.
The statement of Lemma 4.4 of \cite{BR} contains an error.
We thank Haocheng Fan and Liang Xiao for pointing this out to us.
Here we state a corrected version of the lemma.

\begin{lemma}
\label{lem:coversplit-new}
Let $W\subset \C^N$ be a simply connected domain. Let $Q(\tau,X)\in \calO(W)[X]$ be a monic irreducible polynomial which is normal over the quotient field of $\calO(W)$, and denote its  discriminant by  $\Delta_Q\in \calO(W)$.
Let $V\subset W$ be a connected open subset 
that has non-trivial intersection with every irreducible component of the divisor $D=\dv(\Delta_Q)$.
If $f$ is a holomorphic function on $V$ satisfying $Q(\tau,f(\tau))=0$ on $V$, then $f$ has a holomorphic continuation to $W$.
\end{lemma}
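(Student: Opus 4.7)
The plan is to extend $f$ to all of $W$ in two stages. First I would extend $f$ to $W \setminus D$ by analytic continuation along paths, showing that the monodromy is trivial. Second I would cross $D$ using Riemann's removable singularity theorem, exploiting that $f$ is a root of a monic polynomial and is therefore locally bounded on $W$.

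To set things up, on $W \setminus D$ the discriminant $\Delta_Q$ is invertible, so $Y^{\circ} := \{(\tau,x) \in (W \setminus D) \times \C : Q(\tau,x) = 0\}$ is an \'etale cover of $W \setminus D$ of degree $d = \deg Q$. The normality of $Q$ over the quotient field makes this cover Galois with some group $G$ of order $d$ acting freely and transitively on each fiber. The function $f$ defines a section $s\colon V \setminus D \to Y^{\circ}$ (note that $V \setminus D$ is connected, because $D$ has real codimension two in the connected open set $V$), and my aim is to show that the monodromy representation $\rho\colon \pi_1(W\setminus D, p_0) \to G$ at a basepoint $p_0 \in V \setminus D$ is trivial. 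For each irreducible component $D_i$ of $D$, the hypothesis ensures $V \cap D_i$ is a nonempty open subset of $D_i$, and since $D_i^{\mathrm{sing}} \cup \bigcup_{j \neq i} D_j$ is a proper analytic subset of $D_i$, I can pick a smooth point $q_i \in V \cap D_i$ lying on no other component. In a polydisc neighborhood $U_i \subset V$ of $q_i$, $D$ is locally cut out by a single equation and $f$ is holomorphic on all of $U_i$, so the analytic continuation of $f$ around a small meridian of $D_i$ inside $U_i$ is trivial. Transferring along a path in $V \setminus D$ from $p_0$ to a point of $U_i \setminus D$ then produces an element $\sigma_i \in \pi_1(W\setminus D, p_0)$, conjugate to a meridian around $D_i$, whose $\rho$-image fixes $s(p_0)$.

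Here the normality hypothesis becomes decisive: since $G$ acts freely on the fiber over $p_0$, fixing $s(p_0)$ forces $\rho(\sigma_i) = e$, so $\rho$ kills the entire normal subgroup generated by the $\sigma_i$. By the classical fact that $\pi_1(W \setminus D, p_0)$ is normally generated by meridians around the irreducible components of $D$ whenever the ambient $W$ is simply connected, $\rho$ would then be globally trivial. Hence $s$ extends uniquely to a holomorphic section of $Y^{\circ}$ over $W \setminus D$, yielding a holomorphic $\tilde f$ on $W \setminus D$ with $Q(\tau, \tilde f(\tau)) = 0$ and agreeing with $f$ on $V \setminus D$. Since the coefficients of $Q$ are holomorphic on $W$, $\tilde f$ is locally bounded on $W$, so Riemann's removable singularity theorem supplies a holomorphic extension across $D$. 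I expect the main obstacle to be the passage from local to global triviality of $\rho$: without the condition that $V$ meet every irreducible component of $D$, there would be a component around which $f$ supplies no witness of trivial monodromy, and the argument would collapse, which is precisely why this condition is needed in the corrected statement.
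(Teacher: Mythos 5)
Your argument is correct and follows essentially the route the paper indicates for the corrected lemma: normality makes the cover cut out by $Q$ over $W\setminus D$ Galois, so the trivial local monodromy that $f$ supplies at a smooth point of each component of $D=\dv(\Delta_Q)$, combined with the classical fact that $\pi_1(W\setminus D)$ is normally generated by meridians when $W$ is simply connected, forces trivial global monodromy, and Riemann's extension theorem then carries the continuation across $D$. The only point worth spelling out is that the free (simply transitive) action on fibers also needs the cover to be connected, which is where the irreducibility of $Q$ over the quotient field of $\calO(W)$ enters (a splitting of the cover would yield a monic factorization of $Q$ with coefficients extending holomorphically across $D$); you assume irreducibility but never explicitly invoke it.
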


The normality condition is missing in the statement of Lemma 4.4 of \cite{BR}. It is 
required in the proof to ensure that the automorphism group of the branched covering $\tilde W$ of $W$ defined by $Q$ acts transitively on the fibers.
The condition that $W$ be simply connected is required to conclude
the global splitting of $Q$ from the local splitting.
It is an interesting question whether the hypothesis of normality in the above lemma can be dropped or weakened. 

The proof of Theorem~4.5 of \cite{BR} requires a version of the above lemma without the normality assumption in the context of Siegel modular varieties. Since we currently do not have a proof for this, we give a variant of the proof of Theorem~4.5 which does not rely on the lemma. Instead it uses Corollary~\ref{cor:pointwise-conv} below, which guarantees the pointwise convergence of symmetric formal Fourier-Jacobi series of cogenus~$1$ at torsion points, following the argument of \cite[Section 6]{AIP} and partly generalizing it in the genus aspect; see in particular Proposition~6.8 and Theorem~7.4 of~\cite{AIP}. Here we restate the theorem and give the corrected proof.

\begin{theorem}
\label{thm:establish-holmorphicity-new}
Let $Q=\sum_{i=0}^d a_i X^i\in \operatorname{M}_\bullet^{(g)}[X]$ be a nonzero polynomial of degree $d$ with coefficients
$a_i\in \operatorname{M}_{k_0+(d-i)k}^{(g)}$, and let 
\begin{gather*}
f=\sum_{m\geq 0} \phi_m(\tau_1,z)\,
  q_2^m\in \operatorname{FM}^{(g)}_k
\end{gather*}
be a symmetric formal Fourier-Jacobi series of cogenus~$1$ such that $Q(f)=0$. Then $f$ converges locally uniformly on $\H_g$
and defines an element of  $\operatorname{M}_k^{(g)}$. Here $q_2=e^{2\pi i\tau_2}$.
\end{theorem}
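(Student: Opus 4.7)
The plan is to extend the near-boundary sum of $f$ to a holomorphic function on all of $\H_g$ by combining the algebraic constraint $Q(f)=0$ with pointwise convergence at torsion points. First, standard growth bounds on the coefficients of a formal Fourier--Jacobi series imply that $f$ converges absolutely and locally uniformly on a neighborhood $U_\infty\subset\H_g$ of the genus-$(g-1)$ cusp (that is, for $\Im\tau_2$ sufficiently large relative to $\Im\tau_1$ and $z$) to a holomorphic function $F_\infty\in\calO(U_\infty)$. Term-by-term matching turns the formal identity $Q(f)=0$ into the analytic identity $Q(\tau,F_\infty(\tau))=0$ on $U_\infty$, so $F_\infty$ selects a particular local branch of the algebraic equation $Q(X)=0$ over $U_\infty$.

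Next, let $D\subset\H_g$ denote the effective divisor $\dv(a_d\Delta_Q)$, where $\Delta_Q$ is the discriminant of $Q$. On $\H_g\setminus D$, the polynomial $Q(\tau,X)$ has $d$ pairwise distinct simple roots, giving a degree-$d$ unramified cover $\pi\colon\widetilde W\to\H_g\setminus D$. The function $F_\infty$ is a holomorphic section of $\pi$ over $U_\infty\setminus D$, and by the implicit function theorem it extends by analytic continuation along every path in $\H_g\setminus D$. Since $\H_g$ is contractible and $D$ has codimension one, $\pi_1(\H_g\setminus D)$ is generated by meridians around the irreducible components of $D$, so obtaining a single-valued extension $F\in\calO(\H_g\setminus D)$ amounts to showing that the local monodromy of $\pi$ around each component of $D$ fixes the sheet distinguished by $F_\infty$.

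To eliminate this monodromy I would invoke Corollary~\ref{cor:pointwise-conv}: for every torsion point $\tau\in\H_g$ the series $\sum_m\phi_m(\tau_1,z)q_2^m$ converges at $\tau$ to a complex number $f(\tau)$, and the formal identity $Q(f)=0$ forces $f(\tau)$ to be a root of $Q(\tau,X)$. Torsion points are dense in $\H_g$, and in particular in a neighborhood of every irreducible component of $D$. These values distinguish a canonical sheet of $\pi$ at each torsion point, and this pointwise selection is the one picked out by the analytic continuation of $F_\infty$: any nontrivial monodromy would produce two analytic continuations of $F_\infty$ whose values at some torsion point $\tau$ are distinct roots of $Q(\tau,X)$, contradicting the uniqueness of $f(\tau)$ provided by the corollary. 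Therefore $F_\infty$ extends single-valuedly to $F\in\calO(\H_g\setminus D)$ with $Q(F)=0$. Since $|F|$ is locally bounded on $\H_g$ (being a root of a polynomial with holomorphic coefficients), Riemann's extension theorem extends $F$ across the codimension-one set $D$ to an element of $\calO(\H_g)$. Periodicity of $F$ in $\tau_2$ yields a globally convergent Fourier--Jacobi expansion $F=\sum_m\psi_m q_2^m$; uniqueness of such expansions together with $F|_{U_\infty}=F_\infty$ forces $\psi_m=\phi_m$, so $f$ converges locally uniformly to $F$ on $\H_g$, and the modular transformation law is inherited from the symmetry of $f$, yielding $f\in M_k^{(g)}$.

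The main obstacle is the third step: making precise, from the content of Corollary~\ref{cor:pointwise-conv}, that the pointwise values $f(\tau)$ at torsion points really do trap the analytic continuation of $F_\infty$ into a single sheet of $\pi$, so that monodromy around any irreducible component of $D$ cannot act non-trivially. This is precisely the replacement for the application of Lemma~\ref{lem:coversplit-new} in the original proof of \cite[Theorem~4.5]{BR}, and it is the substantive content of the new argument; the remaining steps are formal consequences of standard properties of Fourier--Jacobi expansions and Riemann's extension theorem.
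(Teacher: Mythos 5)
Your overall route (continue the near-boundary branch $F_\infty$ as an algebraic function over $\H_g\setminus D$ and kill the monodromy using torsion values) is not the paper's, and its central step has a genuine gap. The assertion that ``this pointwise selection is the one picked out by the analytic continuation of $F_\infty$'' is precisely what would need proof, and the contradiction you offer does not supply it: Corollary~\ref{cor:pointwise-conv} only says that for a torsion point $(\tau_1,z)$ the series $\sum_m \phi_m(\tau_1,z)\,q_2^m$ converges on the disc $|q_2|<e^{-2\pi C}$; this ties the pointwise sum to $F_\infty$ at most along the vertical $\tau_2$-line through that torsion point (a line which may, moreover, meet $D$), not to the continuation of $F_\infty$ along an arbitrary path in $\H_g\setminus D$. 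Hence two continuations of $F_\infty$ taking distinct values at a torsion point contradict nothing: neither value need equal the pointwise sum $f(\tau)$, and nothing in the corollary prevents the pointwise sums from jumping between sheets as the torsion point varies. What rules this out in the paper is an analytic input your proposal never uses: Proposition~\ref{prop:locally-bounded}, which exploits the monic relation $Q(f)=0$ to show that the partial sums $\sum_{m=1}^M\phi_m\,q_2^m$ are locally bounded on $\H_g$; combined with pointwise convergence on the dense set of torsion fibers, Vitali's theorem (the ``Lemma of Ascoli'') then yields locally uniform convergence on all of $\H_g$ directly, so the limit is holomorphic and satisfies $Q=0$ everywhere, with no covering space, monodromy, or Riemann extension needed. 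Note also that the monodromy-triviality you require is essentially the transitivity-on-fibers issue that invalidated Lemma~4.4 of \cite{BR}: the corrected Lemma~\ref{lem:coversplit-new} needs $Q$ normal, a hypothesis you do not have, and the entire purpose of the corrected proof of the theorem is to avoid exactly this step.

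A secondary gap: Corollary~\ref{cor:pointwise-conv} (via Proposition~\ref{prop:HB}) is proved only for cuspidal $f$, and Proposition~\ref{prop:locally-bounded} moreover requires $Q$ monic, whereas you invoke pointwise convergence for an arbitrary symmetric $f$ and general $Q$ (taking $D=\dv(a_d\Delta_Q)$ only accounts for non-monicity at the level of the covering, not of the estimates). The paper first reduces to the monic, cuspidal situation: choose a nonzero cusp form $f_c$, pass to $h=a_df_c\,f$, which is cuspidal and satisfies the monic polynomial $R(X)=a_d^{d-1}f_c^d\,Q\big((a_df_c)^{-1}X\big)$, conclude that $h$ is a holomorphic Siegel modular form, so $f$ is meromorphic, and then eliminate the polar divisor using \cite[Lemma~4.3, Proposition~4.1]{BR}. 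Some such reduction would have to be built into the start of your argument as well.
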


We call a symmetric formal Fourier-Jacobi series $f$ as above \emph{cuspidal}, if $\phi_0=0$. In this case the symmetry condition implies that all coefficients $\phi_m$ are Jacobi cusp forms. 

\begin{proof}[Proof of Theorem~\ref{thm:establish-holmorphicity-new}]
First, we assume that $Q$ is monic 
and that $f$ is cuspidal.
By Corollary~\ref{cor:pointwise-conv} below,  $f$ converges pointwise absolutely for all $\tau= \kzxz{\tau_1}{z}{\rT z}{\tau_2}\in \H_g$ for which $(\tau_1,z)$ defines a torsion point. This is a dense subset of $\H_g$. Moreover, by Proposition~\ref{prop:locally-bounded} the sequence of partial sums $\sum_{m=1}^M \phi_m(\tau_1,z)\, q_2^m$ for $M\in \Z_{>0}$ is locally bounded on $\H_g$.
By the Lemma of Ascoli (see e.g.~\cite[Chapter 1.4, Ex.~4]{FG}) a sequence $(h_j)$ of holomophic functions  on a domain in $\C^n$ converges locally uniformly, if it converges pointwise on a dense subset and is locally bounded. 

This implies that
$f$ converges locally uniformly on all of $\H_g$ and defines a holomorphic function there.
Since $\Mp_{2g}(\Z)$ is generated by the
embedded Jacobi group $\widetilde{\Gamma}^{(g-1,1)}$ and the
embedded group $\GL_{g}(\Z)$, we find that $f\in \operatorname{M}_k^{(g)}$.

Now assume that $Q$ is not necessarily monic and $f$ not necessarily cuspidal.
We choose a non-zero cusp form $f_c\in \operatorname{M}_l^{(g)}$ of some positive weight $l$.
Then the polynomial $R(X):= a_d^{d-1}f_c^d Q((a_d f_c)^{-1} X)\in
\operatorname{M}_\bullet^{(g)}[X]$ is monic, and the cuspidal formal Fourier-Jacobi series $h:= a_d f_c  f$
satisfies $R(h)=0$. Replacing in the above argument $f$ by $h$ and $Q$ by $R$, we see that $h$
defines a holomorphic Siegel modular form on $\H_g$. Consequently, $f=h/(a_df_c)$ is a meromorphic Siegel modular form.
On the other hand, by \cite[Lemma 4.3]{BR},  there exists an open
neighborhood $U\subset X_g$ of the boundary divisor $\partial
Y_g\subset X_g$ on which  $f$ converges absolutely and locally uniformly. 
Hence, $f$ is holomorphic on the inverse image $V\subset\H_g$ of $U$ under the natural
map $\H_g \to X_g$. Now it follows from \cite[Proposition 4.1]{BR} that the 
polar divisor of $f$ must be trivial, and therefore $f$ is in fact
holomorphic on $\H_g$.
\end{proof}

\begin{remark}
In recent work~\cite{BBHJ} it is shown that Runge's Theorem, reproduced as Theorem~3.8 in~\cite{BR}, is incorrect. In \cite{BR} that result is only used in the proof of Lemma~3.9, which  is also an immediate consequence of an analytic estimate due to J.~Wang, as explained in \cite[Remark~3.10]{BR}. This makes the proof independent of Theorem~3.8.
\end{remark}

\subsubsection*{Acknowledgement}
We thank Cris Poor for his helpful comments on this note.


\section{Pointwise convergence at torsion points}

Let $N$ be a positive integer. We call a point in $(\tau_1,z)\in\H_{g-1}\times \C^{g-1}$ an \emph{$N$-torsion point}, if $z= \tau_1 \lambda +\mu$ with $\lambda,\mu \in \frac{1}{N}\Z^{g-1}$, that is, if $z$ is an $N$-torsion point of the abelian variety $\C^{g-1}/(\tau_1 \Z^{g-1} + \Z^{g-1})$.
The action of the Jacobi group $\widetilde \Gamma^{(g-1,1)}$ preserves the set of $N$-torsion points.
The union of all $N$-torsion points in $\H_{g-1}\times \C^{g-1}$ for $N\in \Z_{>0}$ 
is called the set of torsion points. It is well known that this is a dense subset. 

We denote by $\Gamma(N)\subset \Mp_{2(g-1)}(\Z)$ the principal congruence subgroup  of level $N$.
If $\Gamma \subset \Mp_{2(g-1)}(\Z)$ is a congruence subgroup, we write  $\mathrm{S}_k^{(g-1)}(\Gamma)$ for the space of Siegel cusp  forms of weight $k$ and genus $g-1$ for $\Gamma$.
Recall that if $h$ is any cusp form of weight $k$ and genus $g-1$ for $\Gamma$, 
then the \emph{Hecke bound} 
\begin{gather*}
\|h\|_\infty= \sup_{\tau_1\in \H_{g-1}} \left(|h(\tau_1)|(\det(\Im\tau_1))^{k/2}\right)
\end{gather*}
is finite. It defines a norm on the space $\mathrm{S}_k^{(g-1)}(\Gamma)$.

\begin{proposition}
\label{prop:HB}
Let  $f=\sum_{m} \phi_m(\tau_1,z)\, q_2^m\in\operatorname{FM}^{(g)}_k$, and assume that $f$ is cuspidal.
Fix  $\lambda,\mu \in \frac{1}{N}\Z^{g-1}$.
Then the functions 
\begin{gather*}
  \eta_m(\tau_1)
:=
  e(m\, \rT\lambda \tau_1\lambda)\,
  \phi_m(\tau_1, \tau_1 \lambda + \mu)
\end{gather*}
belong to $\mathrm{S}_k^{(g-1)}(\Gamma(N^2))$, and we have the bound
\begin{gather*}
  \|\eta_m\|_\infty \ll_{g,k,N}
  m^{k + \frac{g-1}{2}}
\quad\text{for all $m>0$}
\text{.}
\end{gather*}
\end{proposition}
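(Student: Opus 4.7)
I would carry out the proof in three steps: modularity under $\Gamma(N^2)$, cuspidality, and the polynomial Hecke bound.

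For modularity, I would verify $\eta_m(\gamma\tau_1) = \det(c\tau_1+d)^k\eta_m(\tau_1)$ for $\gamma = \kzxz{a}{b}{c}{d} \in \Gamma(N^2)$ by combining the Jacobi form transformation
\begin{gather*}
\phi_m\bigl(\gamma\tau_1, (c\tau_1+d)^{-T}z\bigr) = \det(c\tau_1+d)^k\, e\bigl(m\,\rT z\,(c\tau_1+d)^{-1}\,c\,z\bigr)\,\phi_m(\tau_1, z),
\end{gather*}
applied at $z = \tau_1\lambda+\mu$, with the identity $(c\tau_1+d)^{-T}(\tau_1\lambda+\mu) = (\gamma\tau_1)\lambda' + \mu'$ where $\binom{\lambda'}{\mu'} = \gamma^{-T}\binom{\lambda}{\mu}$. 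Since $\gamma \equiv I \pmod{N^2}$ for $\gamma \in \Gamma(N^2)$, the shifts $\alpha := \lambda'-\lambda$ and $\beta := \mu'-\mu$ lie in $N\Z^{g-1}$, so the Heisenberg quasi-periodicity $\phi_m(\tau_1, z+\tau_1\alpha+\beta) = e(-m(\rT\alpha\,\tau_1\,\alpha + 2\,\rT\alpha\,z))\phi_m(\tau_1, z)$ applies and corrects the shift. The various quadratic-in-$(\lambda,\mu)$ phases that arise then combine with the $e(m\,\rT\lambda\,\tau_1\,\lambda)$ normalization to yield the claimed automorphy factor; the level $N^2$ is needed precisely because these phases involve quadratic terms in $\lambda, \mu$, which lie a priori in $\tfrac{1}{N^2}\Z$.

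For cuspidality, expand $\phi_m(\tau_1, z) = \sum_{T_1, r} c(T_1, r, m)\,e(\tr(T_1\tau_1) + \rT r\,z)$ over $(T_1, r)$ with $T = \kzxz{T_1}{r/2}{\rT r/2}{m} > 0$ (since $f$ is cuspidal, each $\phi_m$ is a Jacobi cusp form). Substituting $z = \tau_1\lambda+\mu$ and using the cyclic property of trace gives
\begin{gather*}
\eta_m(\tau_1) = \sum_{T_1, r} c(T_1, r, m)\,e(\rT r\,\mu)\,e\bigl(\tr(T'\tau_1)\bigr),
\end{gather*}
with $T' = T_1 + \tfrac{1}{2}(\lambda\,\rT r + r\,\rT\lambda) + m\,\lambda\,\rT\lambda = \bigl(T_1 - \tfrac{1}{4m}r\,\rT r\bigr) + m\bigl(\lambda+\tfrac{r}{2m}\bigr)\,\rT\bigl(\lambda+\tfrac{r}{2m}\bigr) > 0$, by the Schur complement criterion. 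Hence $\eta_m$ is cuspidal at the standard cusp, and at the other cusps by the modularity established above.

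For the Hecke-type bound, the pointwise identity
\begin{gather*}
|\eta_m(\tau_1)|\,\det(\Im\tau_1)^{k/2} = |\phi_m(\tau_1, \tau_1\lambda+\mu)|\,\det(\Im\tau_1)^{k/2}\,e^{-2\pi m\,\rT\lambda\,\Im(\tau_1)\,\lambda}
\end{gather*}
identifies the left-hand side with the value of the Petersson-invariant norm of the Jacobi form $\phi_m$ at the point $(\tau_1, \tau_1\lambda+\mu)$, so
\begin{gather*}
\|\eta_m\|_\infty \leq \sup_{(\tau_1, z) \in \H_{g-1}\times\C^{g-1}} |\phi_m(\tau_1, z)|\,\det(\Im\tau_1)^{k/2}\,e^{-2\pi m\,\rT y\,(\Im\tau_1)^{-1}\,y}, \quad y = \Im z.
\end{gather*}
To bound the right-hand side by $m^{k+(g-1)/2}$, I would adapt \cite[Section~6]{AIP}: employ the theta decomposition $\phi_m = \sum_{\mu_0 \in (\Z/2m\Z)^{g-1}} h_{\mu_0}\,\vartheta_{m,\mu_0}$, combine polynomial-in-$m$ Hecke bounds on the scalar modular forms $h_{\mu_0}$ of weight $k - (g-1)/2$ with a uniform sup-norm bound on the invariant norm of the theta factors, and sum over the $(2m)^{g-1}$ components. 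The main obstacle is this last step: while \cite[Proposition~6.8]{AIP} establishes the analogous exponent $k+1/2$ in genus~2, extending this to higher genus requires a careful treatment of how the Hecke bounds on the scalar components scale with $m$ (as the level in the theta decomposition grows), together with a uniform bound on higher-rank theta series, in order to obtain the sharp exponent $k + (g-1)/2$.
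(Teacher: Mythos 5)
Your first two steps (invariance under $\Gamma(N^2)$ via the Jacobi transformation law and Heisenberg quasi-periodicity, and cuspidality via the Schur-complement positivity of the shifted index) are fine; they correspond to the standard fact that the paper simply cites from Ziegler. The genuine gap is in the third step, which is the actual content of the proposition. You reduce the claim to a bound $\sup_{(\tau_1,z)}|\phi_m(\tau_1,z)|\det(\Im\tau_1)^{k/2}e^{-2\pi m\,\rT y(\Im\tau_1)^{-1}y}\ll m^{k+\frac{g-1}{2}}$ with a constant independent of $m$, to be obtained from the theta decomposition, Hecke bounds on the components $h_{\mu_0}$, and sup-norm bounds on theta series. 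But the Hecke bound for a cusp form carries a constant depending on the form itself (its sup or Petersson norm), and nothing in your argument controls how the norms of $\phi_m$ --- equivalently of its components $h_{\mu_0}$, whose level moreover grows with $m$ --- depend on $m$. As written, this step uses only that each $\phi_m$ is a Jacobi cusp form; for an arbitrary sequence of Jacobi cusp forms the claimed uniform bound is false (replace $\phi_m$ by $m!\,\phi_m$). The only hypothesis tying the various $\phi_m$ together is the symmetry of $f\in\operatorname{FM}^{(g)}_k$, i.e.\ the $\GL_g(\Z)$-invariance of the coefficients $c(f;t)$, and your sketch never uses it in the estimate; you flag exactly this point as ``the main obstacle'' but leave it unresolved, so the proof is incomplete at its crux.

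For comparison, the paper obtains the uniformity in $m$ as follows. By a theorem of Maass, a form in $\mathrm{S}^{(g-1)}_k(\Gamma(N^2))$ is determined by its Fourier coefficients with indices in a finite set $\calS$ that is independent of $m$ (the level $N^2$ does not depend on $m$), so by finite dimensionality $\|\eta_m\|_\infty\ll_{g,k,N}\sum_{n\in\calS}|c(\eta_m;n)|$. Each $c(\eta_m;n)$ is a sum of at most $\ll_{g,N} m^{\frac{g-1}{2}}$ coefficients $c(f;t[\cdot])$. Then --- and this is the decisive use of symmetry --- one acts by a suitable $u\in\GL_g(\Z)$ so that the transformed index $t'$ has a Minkowski-reduced block; Hermite's bound shows its lower-right entry $m'$ is bounded independently of $m$, so $c(f;t')=c(\phi_{m'};n',r')$ involves only finitely many Jacobi forms, whose Hecke bounds give $|c(f;t')|\ll_{g,k,N}\det(t)^k\ll_{g,k,N} m^k$; multiplying the two counts yields $m^{k+\frac{g-1}{2}}$. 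If you want to salvage your route via the theta decomposition (closer in spirit to the genus-two argument of AIP), you would still have to inject the symmetry of $f$ to bound the norms of the $h_{\mu_0}$ polynomially in $m$; without that input no such bound can exist.
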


\begin{proof}
The first assertion is a standard fact for Jacobi forms at torsion points, see e.g.~\cite[Theorem~1.5]{Ziegler}). For later use we sketch the argument. It is based on the equality
\begin{gather*}
  e(m \rT{\lambda} \mu )\,
  \eta_m(\tau_1)\, e(m \tau_2)
=
  \big(
  \phi_m(\tau_1, z)\, e(m \tau_2) \big|_k\, \gamma
  \big)_{z = 0}
\quad\text{with }
  \gamma
=
  \left(\begin{smallmatrix}
  1 & 0 & 0 & \mu \\ 
  \rT{\lambda} & 1 & \rT{\mu} & 0 \\
  && 1 & -\lambda \\
  && 0 & 1
  \end{smallmatrix}\right)
\text{.}
\end{gather*}
When viewing the principal congruence subgroup~$\Gamma(N^2) \subset \Mp_{2(g-1)}(\Z)$ as embedded into~$\Mp_{2g}(\Z)$, the inclusion
$\gamma \Gamma(N^2) \gamma^{-1}\subset\Mp_{2g}(\Z)$
implies that~$\eta_m$ belongs to the space $\mathrm{S}_k^{(g-1)}(\Gamma(N^2))$ as stated. Note that the level is independent of the index~$m$.

Let $ \operatorname{Pos}_{g-1}(\Q)$ denote the subset of positive definite matrices in $\Sym_{g-1}(\Q)$.
Given a congruence subgroup~$\Gamma$ of~$\Mp_{2(g-1)}(\Z)$, by~\cite[\S13, Theorem~3]{Ma} and multiplicative symmetrization, there is a positive constant~$b$ that only depends on the genus and the weight  such that the following map is injective:
It sends~$h \in \mathrm{S}^{(g-1)}_k(\Gamma)$ to the collection of its Fourier coefficients~$c(h; n)$ for~$n \in \operatorname{Pos}_{g-1}(\Q)$ with~$n_{ii} < b\, [\Gamma : \Mp_{2(g-1)}(\Z)]$ for all~$1 \le i \le g-1$. In particular, since the index of~$\Gamma(N^2)$ is bounded by~$N^{8 (g-1)^2}$, we obtain a norm on 
$\mathrm{S}_k^{(g-1)}(\Gamma(N^2))$ by
\begin{gather*}
  \| h \|_{\mathrm{FE}}
=
  \sum_{n \in \calS} |c(h;n)|
\quad\text{with }
  \calS
=
  \big\{
  n \in \operatorname{Pos}_{g-1}(\Q)
  \mathrel{:}\;
  2 N^2 n_{ij} \in \Z,\,
  n_{ii} < b N^{8 (g-1)^2}
  \big\}
\text{.}
\end{gather*}
Observe that~$\calS$ is a finite set, which depends on~$b$ and~$N$, but not on~$m$. 
Since~$\mathrm{S}^{(g-1)}_k(\Gamma(N^2))$ 
is finite dimensional, norm comparison shows that~$\|\eta_m\|_\infty \ll_{g,k,N} \| \eta_m \|_{\mathrm{FE}}$, where the implied constant is independent of~$m$. 
Hence, 
it suffices to bound individual Fourier coefficients of~$\eta_m$, that is, we have to show that
\begin{gather*}
  |c(\eta_m; n)| \ll_{g,k,N} m^{k + \frac{g-1}{2}}
  \quad\text{for all } n \in \calS
\end{gather*}
to prove the proposition.

For $y\in \Sym_g(\R)$ and $u\in \GL_g(\R)$ we write $y[u]= \rT u y u$.
Returning to the relation between~$\eta_m$ and~$\phi_m$ via the transformation~$\gamma$ in the beginning of the proof, we see that
\begin{gather*}
  |c(\eta_m; n)|
\le
  \sum_r
  \Big|
  c\big( f; t\big[ \left(\begin{smallmatrix} 1 & 0 \\ -\rT{\lambda} & 1\end{smallmatrix}\right) \big] \big)
  \Big|
\quad\text{with }
  t
=
  \left(\begin{smallmatrix}
  n & \frac{1}{2} r \\
  \frac{1}{2} \rT{r} & m
  \end{smallmatrix}\right)
\text{,}
\end{gather*}
where the sum runs over all~$r \in \frac{1}{2 N^2} \Z^{g-1}$ such that~$t$ is positive definite. In particular,
we can estimate their number by
\begin{gather*}
  \#\big\{
  r \in \tfrac{1}{2 N^2} \Z^{g-1}
  \mathrel{:}\;
  r_i^2 <   4 m b N^{8 (g-1)^2} \text{ for all~$1 \le i \le g-1$}
  \big\}
\ll_{g,N}
  m^{\frac{g-1}{2}}
\text{.}
\end{gather*} 
Therefore, the desired bound 
for~$|c(\eta_m; n)|$ follows once we establish the bound
\begin{gather*}
  \Big|
  c\big( f; t\big[ \left(\begin{smallmatrix} 1 & 0 \\ -\rT{\lambda} & 1 \end{smallmatrix}\right) \big] \big)
  \Big|
\ll_{g,k,N}
  m^k
\end{gather*}
for all $n \in \calS$, $r \in \tfrac{1}{2 N^2} \Z^{g-1}$, and $m\in \Z_{>0}$ such that 
$t$ as above 
is positive definite.

Let $M\in \Z_{>0}$ be the denominator of $\lambda$. Then $M$ divides $N$ and the pair $(M\lambda,M)$ defines a vector in $\Z^{g}$ with coprime entries. This implies that there exists~$u \in \GL_{g}(\Z)$ such that
\begin{gather*}
  \left(\begin{smallmatrix} 1 & 0 \\ -\rT{\lambda} & 1 \end{smallmatrix}\right)
  u
=
  \left(\begin{smallmatrix} \rho & \xi \\ 0 & M^{-1} \end{smallmatrix}\right) s
\quad\text{with }
  s
=
  \left(\begin{smallmatrix}
  0       & 0 & 1 \\
  0 &1_{g-2} & 0 \\
  1       & 0 & 0
  \end{smallmatrix}\right)
\text{,}
\end{gather*}
where~$\rho \in \Mat_{g-1}(\Z)\cap \GL_{g-1}(\Q)$ and $\xi \in \Z^{g-1}$.
We may choose~$u$ in such a way that~$n[\rho]$ is Minkowski reduced. 
Comparing determinants on both sides, we see that~$|\det(\rho)| = M$. The symmetry of~$f$ guarantees that
\begin{gather*}
\bigl|c\big( f; t\bigl[ \left(\begin{smallmatrix} 1 & 0 \\ -\rT{\lambda} & 1 \end{smallmatrix}\right) \bigr] \big)\bigr|
=
 \bigl| c\big( f; t\bigl[ \left(\begin{smallmatrix} 1 & 0 \\ -\rT{\lambda} & 1 \end{smallmatrix}\right) u \bigr] \big)\bigr|
=
 \bigl| c\big( f; t' \big)\bigr|
\quad\text{with }
  t'
=
  \left(\begin{smallmatrix}
  n' & \frac{1}{2} r' \\ \frac{1}{2} \rT r' & m'
  \end{smallmatrix}\right)
=
  t\bigl[ \left(\begin{smallmatrix} \rho & \xi \\ 0 & M^{-1} \end{smallmatrix}\right) s \bigr]
\text{.}
\end{gather*}

The bottom right entry~$m'$ of~$t'$ equals the top left entry $( n[\rho] )_{11}$ of the Minkowski reduced symmetric matrix~$n[\rho]$. The Hermite bound together with the bound 
for the diagonal entries of~$t$ allows us to estimate
\begin{gather*}
  \big( n[\rho] \big)_{11}
\ll_g
  \big( \det(\rho)^2 \det(n) \big)^{\frac{1}{g-1}}
\le
  \big( M^2\, b^{g-1} N^{8(g-1)^3} \big)^{\frac{1}{g-1}}
\ll_g
  N^{\frac{2}{g-1}}\, b N^{8(g-1)^2}
\ll_{g,k,N}
  1
\text{.}
\end{gather*}
In particular, we can estimate~$c(f; t')$ in terms of finitely many~$\phi_{m'}$. Using the Hecke bound for Fourier coefficients of the associated vector-valued Siegel modular forms, we obtain that
\begin{gather*}
  |c(f; t')|
=
  |c(\phi_{m'}; n', r')|
\ll_{g,k,N}
  \det(t')^k
=
  \det(t)^k
\le
  \big( \det(n) m \big)^k
\ll_{g,k,N}
  m^k
\text{,}
\end{gather*}
since $n$ is contained in the finite set $\calS$.
\end{proof}

\begin{corollary}
\label{cor:pointwise-conv}
Let  $f=\sum_{m} \phi_m\, q_2^m\in\operatorname{FM}^{(g)}_k$ be cuspidal.
Fix an $N$-torsion point $(\tau_1,z)\in\H_{g-1}\times \C^{g-1}$, and put $C=\rT(\Im z)(\Im \tau_1)^{-1} (\Im z)$. Then the series 
\begin{gather*}
\sum_{m} \phi_m(\tau_1,z) \, q_2^m \in \C[[q_2]]
\end{gather*}
converges absolutely on the disc $|q_2|<e^{-2\pi C}$ and defines a holomorphic function in $q_2$ there. In particular, $f$ converges pointwise absolutely for all $\tau= \kzxz{\tau_1}{z}{\rT z}{\tau_2}\in \H_g$ for which $(\tau_1,z)$ defines a torsion point. 
\end{corollary}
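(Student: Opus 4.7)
The plan is to derive a pointwise estimate on $|\phi_m(\tau_1, z)|$ directly from Proposition~\ref{prop:HB} and then pit it against the decay of $|q_2|^m$; no new analytic input is needed. First I would write $z = \tau_1\lambda + \mu$ with $\lambda,\mu\in\tfrac{1}{N}\Z^{g-1}$ and use the identity
\[
\phi_m(\tau_1, \tau_1\lambda + \mu)
=
e(-m\,\rT{\lambda}\tau_1\lambda)\,\eta_m(\tau_1)
\]
that follows directly from the definition of $\eta_m$ in Proposition~\ref{prop:HB}. Since $\lambda$ and $\mu$ are real, $\Im z = (\Im\tau_1)\lambda$, and therefore
\[
\rT{\lambda}(\Im\tau_1)\lambda = \rT{(\Im z)}(\Im\tau_1)^{-1}(\Im z) = C.
\]
Taking absolute values yields $|\phi_m(\tau_1, z)| = e^{2\pi m C}\,|\eta_m(\tau_1)|$.

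Next I would feed in the Hecke bound. For the fixed $\tau_1$ under consideration, $(\det\Im\tau_1)^{-k/2}$ is a positive constant, and Proposition~\ref{prop:HB} gives $|\eta_m(\tau_1)| \ll m^{k+(g-1)/2}$ with an implied constant depending only on $g,k,N$ and $\tau_1$. Combining,
\[
|\phi_m(\tau_1, z)| \ll m^{k+(g-1)/2}\,e^{2\pi m C}.
\]
For any $q_2$ with $|q_2| \le e^{-2\pi(C+\delta)}$, $\delta > 0$, this gives $|\phi_m(\tau_1,z)|\,|q_2|^m \ll m^{k+(g-1)/2}\, e^{-2\pi m\delta}$, and polynomial growth is beaten by exponential decay. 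Hence the series $\sum_m \phi_m(\tau_1,z)\,q_2^m$ converges absolutely and locally uniformly on the open disc $\{|q_2| < e^{-2\pi C}\}$, and its limit is holomorphic in $q_2$ as a locally uniform limit of polynomials.

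For the final ``in particular'' statement I would use positivity of $\Im\tau$ on $\H_g$: for $\tau = \kzxz{\tau_1}{z}{\rT{z}}{\tau_2} \in \H_g$, the Schur complement condition is precisely $\Im\tau_2 > \rT{(\Im z)}(\Im\tau_1)^{-1}(\Im z) = C$, so $|q_2| = e^{-2\pi\Im\tau_2} < e^{-2\pi C}$ holds automatically and the disc estimate applies. The step requiring the most care is the identification $C = \rT{\lambda}(\Im\tau_1)\lambda$ together with the bookkeeping of the sign in $e(-m\,\rT{\lambda}\tau_1\lambda)$; beyond that I do not anticipate any real obstacle, since the substance of the corollary is entirely packaged into Proposition~\ref{prop:HB}.
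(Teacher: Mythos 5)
Your proposal is correct and follows essentially the same route as the paper: the identity $\phi_m(\tau_1,\tau_1\lambda+\mu)=e(-m\,\rT\lambda\tau_1\lambda)\,\eta_m(\tau_1)$, the observation $C=\rT\lambda(\Im\tau_1)\lambda$, the Hecke-norm bound from Proposition~\ref{prop:HB} giving $|\phi_m(\tau_1,z)|\ll m^{k+\frac{g-1}{2}}e^{2\pi mC}$, exponential domination on $|q_2|\le e^{-2\pi(C+\eps)}$, and the Schur-complement criterion for the final claim. No gaps.
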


\begin{proof}
By assumption there exist  $\lambda,\mu \in \frac{1}{N}\Z^{g-1}$ such that $z= \tau_1 \lambda +\mu$. This implies $C=\rT\lambda(\Im \tau_1) \lambda$.
According to Proposition \ref{prop:HB}, there exist $A,B>0$ such that 
\begin{align*}
|\phi_m(\tau_1,z)| & = |e(-m\, \rT\lambda \tau_1\lambda) \eta_m(\tau_1)|
\leq  e^{2\pi m\, \rT\lambda \Im(\tau_1)\lambda} (\det\Im \tau_1)^{-k/2}  \cdot A\cdot m^{B}
\end{align*}
for all $m > 0$. 
Hence, for any $\eps>0$ and $|q_2|\leq e^{-2\pi (C+\eps)}$ we obtain
\begin{align*}
|\phi_m(\tau_1,z)\,  q_2^m| & \leq   |\phi_m(\tau_1,z)| e^{-2\pi m (C+\eps)}
\leq  A(\det \Im\tau_1)^{-k/2} m^{B}e^{-2\pi \eps m}.
\end{align*}
This implies the first statement. For the last one we note in addition that $\Im(\tau)$ is positive definite, if and only if $\Im(\tau_1)$ and $\Im(\tau_2) -\rT\Im(z)(\Im \tau_1)^{-1} \Im(z)$ are positive definite. 
\end{proof}

\begin{proposition}
\label{prop:locally-bounded}
Let $Q=\sum_{i=0}^d a_i X^i\in \operatorname{M}_\bullet^{(g)}[X]$ be a monic polynomial of degree $d$ with coefficients
$a_i\in \operatorname{M}^{(g)}_{(d-i)k}$.
Let  $f=\sum_{m} \phi_m\, q_2^m\in\operatorname{FM}^{(g)}_k$ be cuspidal, and assume $Q(f)=0$.
Then the sequence of partial sums
\begin{align}
\label{eq:ps}
\sum_{m=1}^M \phi_m(\tau_1,z) q_2^m,\quad M\in \Z_{>0},
\end{align}
is locally bounded on $\H_g$.
\end{proposition}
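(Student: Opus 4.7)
My plan is to bound each Fourier-Jacobi coefficient $\phi_m(\tau_1,z)$ geometrically in $m$ using Cauchy's integral formula in the variable $q_2$, where the integrand is controlled at torsion points by the monic polynomial relation $Q(f)=0$. Set
\begin{gather*}
C(\tau_1,z)=\rT(\Im z)(\Im \tau_1)^{-1}(\Im z)
\quad\text{and}\quad
R(\tau_1,z)=e^{-2\pi C(\tau_1,z)},
\end{gather*}
so that $\tau\in\H_g$ precisely when $|q_2|<R(\tau_1,z)$. On any compact $K\subset\H_g$ with projection $K'\subset\H_{g-1}\times\C^{g-1}$, positivity of $\Im\tau$ together with compactness yields $\sup_K|q_2|<\inf_{K'}R$.

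For a torsion point $(\tau_1^*,z^*)$ with $z^*=\tau_1^*\lambda+\mu$, Corollary~\ref{cor:pointwise-conv} provides a holomorphic function $F^*(q_2):=\sum_m\phi_m(\tau_1^*,z^*)\,q_2^m$ on $|q_2|<R^*:=R(\tau_1^*,z^*)$. Because $Q(f)=0$ as a formal series and both $f$ and the coefficients $a_i$ have convergent specializations at that point, one obtains $Q(\tau_1^*,z^*,\tau_2,F^*(q_2))=0$ on $|q_2|<R^*$, i.e.\ $F^*(q_2)$ is a root of the monic polynomial $Q(\tau_1^*,z^*,\tau_2,X)\in\C[X]$. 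The classical Cauchy bound on roots then gives
\begin{gather*}
|F^*(q_2)|\le B(\tau_1^*,z^*,\tau_2),\qquad
B(\tau):=1+\sum_{i=0}^{d-1}|a_i(\tau)|,
\end{gather*}
where $B$ is continuous on $\H_g$. Applying Cauchy's integral formula on $|q_2|=r<R^*$ yields
\begin{gather*}
|\phi_m(\tau_1^*,z^*)|\le r^{-m}\sup_{|q_2|=r}B(\tau_1^*,z^*,\tau_2)
\quad\text{for every } r<R^*.
\end{gather*}

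Both sides of this inequality are continuous in $(\tau_1,z)$, so density of the torsion points extends it to every $(\tau_1,z)$ with $r<R(\tau_1,z)$. For $K,K'$ as above, fix $r_K$ with $\sup_K|q_2|<r_K<\inf_{K'}R$. Then $|q_2|/r_K$ is uniformly bounded away from $1$ on $K$, and $B_K:=\sup_{K',\,|q_2|=r_K}B<\infty$ by continuity and compactness. Summing the geometric series,
\begin{gather*}
|S_M(\tau_1,z,\tau_2)|\le B_K\sum_{m=1}^M(|q_2|/r_K)^m\le \frac{B_K}{1-\sup_K|q_2|/r_K},
\end{gather*}
uniformly in $M$ and in $\tau\in K$, which is the desired local boundedness.

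The only delicate step is the density extension of the pointwise bound: for a sequence of torsion points $(\tau_1^{(n)},z^{(n)})\to(\tau_1,z)$, continuity of $R$ ensures $R^{(n)}>r$ eventually; the left-hand side converges by holomorphy of $\phi_m$; and the right-hand side converges because $B$ is jointly continuous on $\H_g$ and the supremum is taken over a compact circle in $q_2$ (with $\Re\tau_2$ taken in a fundamental period). Everything else reduces to standard complex analysis.
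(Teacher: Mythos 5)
Your overall strategy is in substance the paper's own: specialize the monic relation $Q(f)=0$ at torsion points (where Corollary~\ref{cor:pointwise-conv} gives a convergent series), bound the resulting root of the monic polynomial by $1+\sum_i|a_i|$, recover $\phi_m$ by a contour integral in $q_2$ (the paper's Fourier integral in $\Re\tau_2$ is the same thing), and pass from torsion points to all points by density and continuity (you transfer the coefficient bounds, the paper transfers the partial-sum bounds; that difference is immaterial). There is, however, one step that is false as written: it is not true that an arbitrary compact $K\subset\H_g$ with projection $K'$ satisfies $\sup_K|q_2|<\inf_{K'}R$. The radius $R(\tau_1,z)=e^{-2\pi C(\tau_1,z)}$ varies with the base point, and a point of $K$ whose own $C$ is small can have $|q_2|$ far larger than the radius $R$ attached to another point of $K'$ with large $C$. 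For instance, with $g=2$ take $\tau^{(a)}=\kzxz{i}{0}{0}{i/100}$ and $\tau^{(b)}=\kzxz{i}{i}{i}{3i/2}$, both in $\H_2$: then $|q_2(\tau^{(a)})|=e^{-\pi/50}\approx 0.94$ while $R(i,i)=e^{-2\pi}\approx 0.002$, so $\sup_K|q_2|>\inf_{K'}R$ and no single radius $r_K$ with $\sup_K|q_2|<r_K<\inf_{K'}R$ exists. On such $K$ your uniform geometric-series estimate breaks down, since the fixed circle $|q_2|=r_K$ is not available for all points of $K'$ while still dominating $|q_2|$ on $K$.

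The gap is easily repaired, in either of two ways. Since local boundedness only requires a bound on some neighborhood of each point, you may simply take $K$ to be a small compact neighborhood of a fixed $\tau^{(0)}$; there $|q_2|<R$ holds with a uniform margin by continuity, a suitable $r_K$ exists, and your argument goes through verbatim. Alternatively, as in the paper's proof, let the integration radius depend on the point: on the sets $K_\eps(U)$, where $\varrho(\tau)=\Im(\tau_2)-\rT(\Im z)(\Im\tau_1)^{-1}(\Im z)\ge\eps$, one integrates at height $\Im\tau_2=C(\tau_1,z)+\eps$ to get $|\phi_m(\tau_1,z)|\le D_\eps(U)\,e^{2\pi m(C(\tau_1,z)+\eps)}$, and since the point's own $|q_2|\le e^{-2\pi(C(\tau_1,z)+2\eps)}$, the product is $\le D_\eps(U)\,e^{-2\pi m\eps}$ uniformly, even though $C$ varies widely over $U$; this gives boundedness on the large sets $K_{2\eps}(U)$ directly. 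With either repair the rest of your proof, including the density extension of the coefficient bound, is sound.
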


\begin{proof}
Let $\varrho:\H_g\to \R_{>0}$ be the surjective map taking 
$\tau =  \kzxz{\tau_1}{z}{\rT z}{\tau_2}$ to $\varrho(\tau)=  \Im(\tau_2) -\rT(\Im z)(\Im \tau_1)^{-1} (\Im z)$,
where $\tau_1\in \H_{g-1}$, $z\in \C^{g-1}$, and $\tau_2\in \H$. 
Let $U\subset \H_{g-1}\times \C^{g-1}$ be a compact subset.
For any small $\eps>0$ we define a compact subset of $\H_g$ by 
\begin{gather*}
K_\eps(U) = \{ \tau\in \H_g\mid \; (\tau_1,z)\in U, \; \varrho(\tau) \in [\eps, 1/\eps],\; \Re(\tau_2)\in [-1/\eps,1/\eps]\}.
\end{gather*}

Consider a torsion point $(\tau_1,z)\in U$ and $\tau_2\in \H$ such that the corresponding matrix $\tau$ as above is contained in $K_\eps(U)$. Specializing the polynomial equation $Q(f)=0$ to such $\tau$, by Corollary~\ref{cor:pointwise-conv} we get the relation 
\begin{align}
\label{eq:rel2}
f^d+a_{d-1} f^{d-1} + \dots + a_0 =0
\end{align}
of convergent powers series in $q_2$ on the disc $|q_2|< e^{-2\pi C}$. Here $C=\rT(\Im z)(\Im \tau_1)^{-1} (\Im z)$. It implies the bound 
\begin{align}
\label{eq:rel3}
|f|\leq \sup_{\tau\in K_\eps(U)}\bigg( 1+ \sum_{i=0}^{d-1}|a_i(\tau)|\bigg) =:D_\eps(U).
\end{align}
In fact, this is clear if $|f|\leq 1$, and it directly follows from \eqref{eq:rel2} if $|f|\geq 1$.
Since $K_\eps(U)$ is compact and the $a_i$ are continuous, the quantity $D_\eps(U)$ is finite. The point here is that \eqref{eq:rel3} holds uniformly for all points $\tau\in K_\eps(U)$ for which $(\tau_1,z)$ is a torsion point.

Still for such a $\tau$, we may compute $\phi_m$ by means of the Fourier integral 
\begin{gather*}
\phi_m(\tau_1,z) =  \int_0^1 f\kzxz{\tau_1}{z}{\rT z}{u_2+iv_2} e^{-2\pi i m(u
_2+iv_2)}\,du_2,
\end{gather*}
for any fixed $v_2>C$. Taking $v_2=C+\eps$ and using \eqref{eq:rel3} we get
$|\phi_m(\tau_1,z)|
\leq D_\eps(U) e^{2\pi m(C+\eps)}$.
Now, if $\tau$ is actually contained in  the subset $K_{2\eps}(U)\subset K_\eps(U)$, we may estimate
\begin{align*}
|\sum_{m=1}^M \phi_m(\tau_1,z) q_2^m| &\leq \sum_{m=1}^M |\phi_m(\tau_1,z)| e^{-2\pi m (C+2\eps)}
\leq D_\eps(U) \sum_{m=1}^M e^{-2\pi m \eps} \leq \frac{  D_\eps(U)e^{-2\pi  \eps}}{1-e^{-2\pi  \eps}}.
\end{align*}
This shows that the sequence of partial sums \eqref{eq:ps} is bounded on the subset of $\tau \in K_{2\eps}(U)$ for which $(\tau_1,z)$ is a torsion point. Since this is a dense subset of $K_{2\eps}(U)$ and since the partial sums are continuous, we may conclude that  \eqref{eq:ps} is bounded on the whole $K_{2\eps}(U)$. Since every point in $\H_g$ has a neighborhood of the form $K_{2\eps}(U)$ for suitable $U$ and $\eps$, we obtain the assertion.
\end{proof}


\end{document}